\newtheorem{thm}{Theorem}[section]
\newtheorem{lem}[thm]{Lemma}
\newtheorem{coro}[thm]{Corollary}
\theoremstyle{definition}
\newtheorem{exm}[thm]{Example}
\newtheorem{rem}[thm]{Remark}
\numberwithin{equation}{section}
\title{Catalan-like numbers and Stieltjes moment sequences}
\author{Huyile Liang, Lili Mu, Yi Wang
\thanks{{\it Email address:}\quad wangyi@dlut.edu.cn (Y. Wang)}}
\date{\footnotesize School of Mathematical Sciences, Dalian University of Technology, Dalian 116024, PR China}
\begin{document}

\maketitle

\begin{abstract}
We provide sufficient conditions under which the Catalan-like numbers are Stieltjes moment sequences.
As applications, we show that many well-known counting coefficients,
including the Bell numbers, the Catalan numbers,
the central binomial coefficients, the central Delannoy numbers,
the factorial numbers, the large and little Schr\"oder numbers,
are Stieltjes moment sequences in a unified approach.
\bigskip
\\[1pt]
{\sl MSC:}\quad 05A99; 15B99; 44A60
\\
{\sl Keywords:}\quad Stieltjes moment sequence; Catalan-like number; Recursive matrix; Riordan array;
Hankel matrix; Totally positive matrix
\end{abstract}

\section{Introduction}
\hspace*{\parindent}

A sequence $(m_n)_{n\ge 0}$ of numbers is said to be a {\it Stieltjes moment sequence} if it has the form
\begin{equation}\label{i-e}
m_n=\int_0^{+\infty}x^nd\mu(x),
\end{equation}
where $\mu$ is a non-negative measure on $[0,+\infty)$.
It is well known that $(m_n)_{n\ge 0}$ is a Stieltjes moment sequence if and only if
$\det[m_{i+j}]_{0\le i,j\le n}\ge 0$ and $\det[m_{i+j+1}]_{0\le i,j\le n}\ge 0$ for all $n\ge 0$
\cite[Theorem 1.3]{ST43}.
Another characterization for Stieltjes moment sequences comes from the theory of total positivity.

Let $A=[a_{n,k}]_{n,k\ge 0}$ be a finite or an infinite matrix.
It is {\it totally positive} ({\it TP} for short),
if its minors of all orders are nonnegative.
Let $\alpha=(a_n)_{n\ge 0}$ be an infinite sequence of nonnegative numbers.
Define the {\it Hankel matrix} $H(\alpha)$ of the sequence $\alpha$ by
$$H(\alpha)=[a_{i+j}]_{i,j\ge 0}
=\left[\begin{array}{lllll}
a_{0} & a_1 & a_2 & a_3 & \cdots\\
a_{1} & a_{2} & a_3 & a_4 & \cdots\\
a_{2} & a_{3} & a_{4} & a_5 & \cdots\\
a_{3} & a_{4} & a_{5} & a_{6} & \cdots\\
\vdots & \vdots &\vdots & \vdots & \ddots\\
\end{array}\right].$$
Then $\alpha$ is a Stieltjes moment sequence
if and only if $H(\alpha)$ is totally positive (see \cite[Theorem 4.4]{Pin10} for instance).

Many counting coefficients are Stieltjes moment sequences.
For example, the factorial numbers $n!$ form a Stieltjes moment sequence since
\begin{equation}\label{fn-i}
n!=\int_0^{\infty}x^ne^{-x}dx.
\end{equation}
The Bell numbers $B_n$ form a Stieltjes moment sequence
since $B_n$ can be interpreted as the $n$th moment of a Poisson distribution with expected value $1$
by Dobinski's formula
$$B_n=\frac{1}{e}\sum_{k\ge 0}\frac{k^n}{k!}.$$ 
The Catalan numbers $C_n=\binom{2n}{n}/(n+1)$
form a Stieltjes moment sequence since
$$\det[C_{i+j}]_{0\le i,j\le n}=\det[C_{i+j+1}]_{0\le i,j\le n}=1,\qquad n=0,1,2,\ldots$$
(see Aigner \cite{Aig99} for instance).
Bennett \cite{Ben11} showed that the central Delannoy numbers $D_n$
and the little Schr\"oder numbers $S_n$ form Stieltjes moment sequences by means of their generating functions
(see Remark \ref{ben} and Example \ref{ex2}).
All these counting coefficients are the so-called Catalan-like numbers.
In this note we provide sufficient conditions such that the Catalan-like numbers are Stieltjes moment sequences
by the total positivity of the associated Hankel matrices.
As applications,
we show that the Bell numbers, 
the Catalan numbers,
the central binomial coefficients,
the central Delannoy numbers,
the factorial numbers,
the large and little Schr\"oder numbers
are Stieltjes moment sequences in a unified approach.

\section{Main results and applications}
\hspace*{\parindent}

Let $\sigma=(s_k)_{k\ge 0}$ and $\tau=(t_k)_{k\ge 1}$ be two sequences of nonnegative numbers
and define an infinite lower triangular matrix $R:=R^{\sigma,\tau}=[r_{n,k}]_{n,k\ge 0}$
by the recurrence
\begin{equation}\label{rst-eq}
r_{0,0}=1,\qquad r_{n+1,k}=r_{n,k-1}+s_kr_{n,k}+t_{k+1}r_{n,k+1},
\end{equation}
where $r_{n,k}=0$ unless $n\ge k\ge 0$.
Following Aigner~\cite{Aig01},
we say that $R^{\sigma,\tau}$ is the {\it recursive matrix}
and $r_{n,0}$ are the {\it Catalan-like numbers}
corresponding to $(\sigma,\tau)$.

The Catalan-like numbers unify many well-known counting coefficients, such as
\begin{itemize}
  \item [\rm (1)] the Catalan numbers $C_n$ if $\sigma=(1,2,2,\ldots)$ and $\tau=(1,1,1,\ldots)$;
  \item [\rm (2)] the central binomial coefficients $\binom{2n}{n}$ if $\sigma=(2,2,2,\ldots)$ and $\tau=(2,1,1,\ldots)$;
  \item [\rm (3)] the central Delannoy numbers $D_n$ if $\sigma=(3,3,3,\ldots)$ and $\tau=(4,2,2,\ldots)$;
  \item [\rm (4)] the large Schr\"oder numbers $r_n$ if $\sigma=(2,3,3,\ldots)$ and $\tau=(2,2,2,\ldots)$;
  \item [\rm (5)] the little Schr\"oder numbers $S_n$ if $\sigma=(1,3,3,\ldots)$ and $\tau=(2,2,2,\ldots)$;
  \item [\rm (6)] the (restricted) hexagonal numbers $h_n$ if $\sigma=(3,3,3,\ldots)$ and $\tau=(1,1,1,\ldots)$;
  \item [\rm (7)] the Bell numbers $B_n$ if $\sigma=\tau=(1,2,3,4,\ldots)$;
  \item [\rm (8)] the factorial numbers $n!$ if $\sigma=(1,3,5,7,\ldots)$ and $\tau=(1,4,9,16,\ldots)$.
\end{itemize}

Rewrite the recursive relation \eqref{rst-eq} as
$$
\left[
\begin{array}{ccccc}
r_{1,0} & r_{1,1} &  &  &  \\
r_{2,0} & r_{2,1} & r_{2,2} &  &  \\
r_{3,0} & r_{3,1} & r_{3,2} & r_{3,3} & \\
& & \cdots  & & \ddots   \\
\end{array}
\right]
=
\left[
\begin{array}{cccc}
r_{0,0} &  &  &  \\
r_{1,0} & r_{1,1} &  &  \\
r_{2,0} & r_{2,1} & r_{2,2} &  \\
& \cdots &  & \ddots   \\
\end{array}
\right]
\left[
\begin{array}{cccc}
s_0 & 1 &  &\\
t_1 & s_1 & 1 &\\
 & t_2 & s_2 & \ddots\\
& &\ddots & \ddots \\
\end{array}\right],
$$
or briefly,
\begin{equation*}\label{AJ}
\overline{R}=RJ
\end{equation*}
where $\overline{R}$ is obtained from $R$ by deleting the $0$th row and $J$ is the tridiagonal matrix
\begin{equation*}\label{J-eq}
J:=J^{\sigma,\tau}=\left[
\begin{array}{ccccc}
s_0 & 1 &  &  &\\
t_1 & s_1 & 1 &\\
 & t_2 & s_2 & 1 &\\
 & & t_3 & s_3 & \ddots\\
& & &\ddots & \ddots \\
\end{array}\right].
\end{equation*}
Clearly, the recursive relation \eqref{rst-eq} is decided completely by the tridiagonal matrix $J$.
Call $J$ the {\it coefficient matrix} of the recursive relation \eqref{rst-eq}.

\begin{thm}\label{jh-lem}
If the coefficient matrix is totally positive,
then the corresponding Catalan-like numbers form a Stieltjes moment sequence.
\end{thm}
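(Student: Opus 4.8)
The plan is to realise the Catalan-like numbers as the $(0,0)$-entries of the powers of the coefficient matrix $J$, to factor the associated Hankel matrix through the recursive matrix, and then to reduce the whole statement to a single total-positivity fact relating a lower-triangular matrix to the matrix driving its row recurrence. First I would iterate the relation $\overline R=RJ$: since $r_{0,0}=1$ and $r_{0,k}=0$ for $k\ge1$, the $n$th row of $R$ is the $0$th row of $J^n$, so that $r_{n,k}=(J^n)_{0,k}$ and, in particular, the Catalan-like numbers are the ``diagonal moments'' $r_{n,0}=(J^n)_{0,0}$.

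By the criterion recalled in the introduction (\cite[Theorem~4.4]{Pin10}) it suffices to show that the Hankel matrix $H=[r_{i+j,0}]_{i,j\ge0}$ is totally positive. Reading off the $(0,0)$-entry of $J^{i+j}=J^iJ^j$ gives
\[
r_{i+j,0}=(J^{i+j})_{0,0}=\sum_{k\ge0}(J^i)_{0,k}\,(J^j)_{k,0}=\sum_{k\ge0}r_{i,k}\,\widetilde r_{j,k},\qquad \widetilde r_{n,k}:=(J^n)_{k,0},
\]
a finite sum because $J$ is banded; hence $H=R\,\widetilde R^{\,T}$ with $\widetilde R=[\widetilde r_{n,k}]_{n,k\ge0}$. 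Since products and transposes of totally positive matrices are again totally positive (Cauchy--Binet, the relevant sums being finite by lower-triangularity), it is enough to prove that $R$ and $\widetilde R$ are both totally positive. Now $R$ is lower-triangular with $r_{0,0}=1$ and $\overline R=RJ$, while $\widetilde R$ is lower-triangular (again by bandedness) with $\widetilde r_{0,0}=1$ and, reading off $J^{n+1}=J\cdot J^n$, satisfies $\overline{\widetilde R}=\widetilde R\,J^{T}$; as $J$ is totally positive if and only if $J^{T}$ is, both matrices are covered by the following claim.

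\emph{Claim.} If $A$ is lower-triangular with $(0,0)$-entry $1$ and the matrix $P$ given by $\overline A=AP$ is totally positive, then $A$ is totally positive. I would prove this by induction on the largest row index $N$ occurring in a minor. Fix a minor of $A$ on rows $0\le i_1<\cdots<i_p$ and columns $0\le k_1<\cdots<k_p$ with $i_p=N$. If $i_1=0$, expanding along the first row (which is $(1,0,\dots,0)$) shows the minor vanishes when $k_1>0$ and otherwise equals the minor on rows $(i_2,\dots,i_p)$ and columns $(k_2,\dots,k_p)$, whose smallest row index is $i_2\ge1$; so it is no loss to assume $i_1\ge1$. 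Then rows $i_1,\dots,i_p$ of $A$ are rows $i_1-1,\dots,i_p-1$ of $\overline A=AP$, and Cauchy--Binet expresses the minor as
\[
\sum_{l_1<\cdots<l_p}A\!\begin{pmatrix}i_1-1&\cdots&i_p-1\\ l_1&\cdots&l_p\end{pmatrix}P\!\begin{pmatrix}l_1&\cdots&l_p\\ k_1&\cdots&k_p\end{pmatrix},
\]
which is a \emph{finite} (the $A$-minor vanishes unless $l_m\le i_m-1$ for every $m$, by lower-triangularity) \emph{nonnegative} (the $P$-minors are $\ge0$ since $P$ is totally positive) combination of minors of $A$ whose largest row index equals $N-1$; the inductive hypothesis then finishes the claim, the base case $N=0$ being the minor $a_{0,0}=1$.

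Applying the claim to $(A,P)=(R,J)$ and to $(A,P)=(\widetilde R,J^{T})$ shows that $R$ and $\widetilde R$, hence also $H=R\,\widetilde R^{\,T}$, are totally positive; by \cite[Theorem~4.4]{Pin10} the sequence $(r_{n,0})_{n\ge0}$ is therefore a Stieltjes moment sequence. The hard part is the Claim: one must arrange the induction so that the two reductions --- peeling off row and column $0$, and the Cauchy--Binet step --- interlock correctly, and so that finiteness of every sum is read off from lower-triangularity. (When all $t_k>0$ there is a tidier variant: conjugating $J$ by the positive diagonal matrix $\mathrm{diag}(1,\sqrt{t_1},\sqrt{t_1t_2},\dots)$ turns it into a symmetric tridiagonal matrix without altering the moments $(J^n)_{0,0}$ or total positivity, so that $H$ acquires a genuine Gram-type factorisation $\widehat R\,\widehat R^{\,T}$, with $\widehat R$ the recursive matrix of the symmetrised coefficient matrix.)
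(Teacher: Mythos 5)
Your argument is correct, and it shares the paper's overall skeleton---prove that the Hankel matrix $H=[r_{i+j,0}]$ is totally positive (TP) by factoring it through the recursive matrix $R$, whose TP is deduced from that of $J$---but both halves are carried out differently. For the triangular factor, the paper argues by induction on the leading principal submatrices, using the factorization $R_{n+1}=\mathrm{diag}(1,R_n)\,L_n$ with $L_n$ assembled from the first rows of $J$, whereas you prove a more general lemma (any lower-triangular $A$ with $a_{0,0}=1$ and $\overline{A}=AP$, $P$ TP, is TP) by Cauchy--Binet with induction on the largest row index; your lemma has the advantage of applying verbatim a second time, to $\widetilde{R}$ with $P=J^{T}$, and your finiteness bookkeeping (all sums truncated by lower-triangularity and bandedness) is handled correctly. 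For the Hankel factorization, the paper simply cites Aigner's identity $H=RTR^{t}$ with $T=\mathrm{diag}(T_0,T_1,\dots)$, $T_k=t_1\cdots t_k$, while you derive $H=R\,\widetilde{R}^{\,T}$ from scratch via $r_{n,k}=(J^{n})_{0,k}$ and $J^{i+j}=J^{i}J^{j}$; the two factorizations in fact coincide, since the identity $JT=TJ^{T}$ gives $(J^{n})_{k,0}=T_k (J^{n})_{0,k}$, i.e.\ $\widetilde{R}=RT$ (this is your closing symmetrization remark in disguise, and it needs no hypothesis $t_k>0$). In short, the paper's proof is shorter because it leans on the cited identity \cite{Aig01} and a cheaper induction, while yours is self-contained, avoids that citation, and isolates a reusable total-positivity lemma for matrices defined by a row recurrence.
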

\begin{proof}
Let $H=[r_{n+k,0}]_{n,k\ge0}$ be the Hankel matrix of the Catalan-like numbers $(r_{n,0})_{n\ge 0}$.
We need to show that $H$ is totally positive.
We do this by two steps.
We first show the total positivity of the coefficient matrix $J$ implies that of the recursive matrix $R$.
Let $R_n=[r_{i,j}]_{0\le i,j\le n}$ be the $n$th leading principal submatrix of $R$.
Clearly, to show that $R$ is TP, it suffices to show that $R_n$ are TP for $n\ge 0$.
We do this by induction on $n$.
Obviously, $R_0$ is TP.
Assume that $R_n$ is TP.
Then by \eqref{rst-eq}, we have
$$R_{n+1}=\left[\begin{array}{cc}1 & O\\ O & R_n\\\end{array}\right]L_n,$$
where
$$L_n=\left[
      \begin{array}{ccccccc}
        1 & &  &  &  &  &  \\
        s_0 & 1 &  &  &  &  &  \\
        t_1 & s_1 & 1 &  &  &  &  \\
         & t_2 & s_2 & \ddots &  &  &  \\
         &  & \ddots & \ddots & 1 &  &  \\
         &  &  & t_{n-1} & s_{n-1} & 1 &  \\
         &  &  &  & t_n & s_n & 1 \\
      \end{array}
    \right].$$
Clearly, the total positivity of $R_n$ implies that of $\left[\begin{array}{cc}1 & O\\ O & R_n\\\end{array}\right]$.
On the other hand, $J$ is TP, so is its submatrix $J_n$, as well as the matrix $L_n$.
Thus the product matrix $R_{n+1}$ is TP,
and $R$ is therefore TP by induction.


Secondly we show the total positivity of $R$ implies that of $H$.
Let $T_0=1, T_k=t_1\cdots t_k$ and 
$$T=\left[
      \begin{array}{cccc}
        T_0 &  &  &  \\
         & T_1 &  &  \\
         &  & T_2 &  \\
         &  &  & \ddots \\
      \end{array}
    \right].$$
Then it is not difficult to verify that $H=RTR^t$ (see \cite[(2.5)]{Aig01}).
Now $R$ is TP, and so is its transpose $R^t$. Clearly, $T$ is TP.
Thus the product $H$ is also TP.
This completes the proof.
\end{proof}

We now turn to the total positivity of tridiagonal matrices.
Such a matrix is often called a {\it Jacobi matrix}.
There are many well-known results about the total positivity of tridiagonal matrices.
For example,
a finite nonnegative tridiagonal matrix is totally positive if and only if
all its principal minors containing consecutive rows and columns are nonnegative
\cite[Theorem 4.3]{Pin10};
and in particular, an irreducible nonnegative tridiagonal matrix is totally positive
if and only if all its leading principal minors are positive \cite[Example 2.2, p. 149]{Min88}.
Clearly, $J^{\sigma,\tau}$ is irreducible.
So, to show the total positivity of $J^{\sigma,\tau}$,
it suffices to show that all its leading principal minors are positive.

\begin{exm}
For the Catalan-like numbers $n!$,
we have $s_k=2k+1$ and $t_k=k^2$.
It is not difficult to show that the $n$th leading principal minor of $J^{\sigma,\tau}$ is equal to $n!$.
Thus $J^{\sigma,\tau}$ is totally positive, and so the factorial numbers $n!$ form a Stieltjes moment sequence,
a well-known result.
\end{exm}

\begin{lem}\label{jtp-lem}
If $s_0\ge 1$ and $s_k\ge t_k+1$ for $k\ge 1$,
then the tridiagonal matrix $J^{\sigma,\tau}$ is totally positive.
\end{lem}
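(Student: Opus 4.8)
The plan is to reduce the total positivity of the infinite Jacobi matrix $J^{\sigma,\tau}$ to the positivity of a family of determinants, and then to settle that positivity by a single induction with a strengthened hypothesis. Recall from the discussion preceding the statement that a finite nonnegative tridiagonal matrix is TP if and only if every principal minor formed by a block of consecutive rows and columns is nonnegative. Since total positivity of the infinite matrix $J^{\sigma,\tau}$ means that all of its finite minors are nonnegative, and every such minor sits inside a finite leading principal truncation $J_N:=J^{\sigma,\tau}[\{0,\dots,N-1\}]$, it suffices to show that for all $i\ge 0$ and $n\ge 1$ the contiguous principal minor
$$d_n^{(i)}:=\det\bigl(J^{\sigma,\tau}[\{i,i+1,\dots,i+n-1\}]\bigr)$$
is positive; note also that $J^{\sigma,\tau}$ is nonnegative, since the hypotheses give $s_0\ge 1$, $s_k\ge t_k+1\ge 1$ for $k\ge 1$, and $t_k\ge 0$ throughout.

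Next I would record the three-term recurrence for these minors. The submatrix of $J^{\sigma,\tau}$ on the index block $\{i,\dots,i+n-1\}$ is again tridiagonal, with diagonal entries $s_i,\dots,s_{i+n-1}$, all superdiagonal entries equal to $1$, and subdiagonal entries $t_{i+1},\dots,t_{i+n-1}$. Expanding its determinant along the last row yields
$$d_n^{(i)}=s_{i+n-1}\,d_{n-1}^{(i)}-t_{i+n-1}\,d_{n-2}^{(i)},\qquad d_0^{(i)}=1,\quad d_1^{(i)}=s_i.$$

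The delicate point, and essentially the only place where any thought is needed, is that the minus sign in this recurrence means positivity of $d_{n-1}^{(i)}$ and $d_{n-2}^{(i)}$ alone does not force $d_n^{(i)}>0$: one needs control on the ratio $d_{n-1}^{(i)}/d_{n-2}^{(i)}$. The right way to package this is to prove, by induction on $n$, the monotonicity chain
$$1=d_0^{(i)}\le d_1^{(i)}\le\cdots\le d_n^{(i)}\qquad\text{for every }i\ge 0.$$
The base cases $n=0$ and $n=1$ are immediate: $d_0^{(i)}=1$, and $d_1^{(i)}=s_i\ge 1$ using $s_0\ge 1$ when $i=0$ and $s_i\ge t_i+1\ge 1$ when $i\ge 1$. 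For the inductive step, assuming the chain holds through $d_{n-1}^{(i)}$ for all $i$ — in particular $d_{n-1}^{(i)}\ge d_{n-2}^{(i)}>0$ — the hypothesis $s_{i+n-1}\ge t_{i+n-1}+1$ gives
$$d_n^{(i)}\ge(t_{i+n-1}+1)\,d_{n-1}^{(i)}-t_{i+n-1}\,d_{n-2}^{(i)}=d_{n-1}^{(i)}+t_{i+n-1}\bigl(d_{n-1}^{(i)}-d_{n-2}^{(i)}\bigr)\ge d_{n-1}^{(i)},$$
which closes the induction. Consequently every $d_n^{(i)}\ge 1>0$, so each finite truncation $J_N$ is TP by the cited criterion, and therefore $J^{\sigma,\tau}$ is totally positive. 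I do not anticipate a genuine obstacle; the whole content lies in choosing the strengthened inductive statement (monotonicity rather than mere positivity), which is exactly what lets the subtracted term $t_{i+n-1}\,d_{n-2}^{(i)}$ be absorbed.
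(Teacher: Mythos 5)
Your proof is correct and rests on exactly the same key idea as the paper's: a strengthened induction showing that the relevant principal minors form a nondecreasing chain bounded below by $1$, so that the subtracted term $t_{i+n-1}\,d_{n-2}^{(i)}$ in the three-term recurrence is absorbed. The only difference is cosmetic: the paper invokes the criterion (quoted just before the lemma) that an irreducible nonnegative tridiagonal matrix is TP once its leading principal minors are positive, and hence runs this induction only for the leading minors (your case $i=0$), whereas you use the consecutive-principal-minor criterion and therefore run the identical induction uniformly in the starting index $i$.
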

\begin{proof}
Let $D_n$ be the $n$th leading principal minor of $J^{\sigma,\tau}$.
It suffices to show that all $D_n$ are nonnegative.
We do this by showing the following stronger result:
$$1\le D_0\le D_1\le D_{n-2}\le D_{n-1}\le D_n\le\cdots.$$
Obviously, $D_0=s_0\ge 1$ and $D_1=s_0s_1-t_1\ge s_0=D_0$ since $s_1\ge t_1+1$.
Assume now that $D_{n-1}\ge D_{n-2}\ge 1$ for $n\ge 2$.
Then by expanding the determinant $D_n$ along the last row or column, we obtain
$$D_n=s_nD_{n-1}-t_nD_{n-2}\ge (s_n-t_n)D_{n-1}\ge D_{n-1}\ge 1$$
by $s_n\ge t_n+1$ and the induction hypothesis, as required.
The proof is complete.
\end{proof}


Combining Theorem \ref{jh-lem} and Lemma \ref{jtp-lem}, we obtain the following.

\begin{coro}\label{thm1}
If $s_0\ge 1$ and $s_k\ge t_k+1$ for $k\ge 1$,
then the Catalan-like numbers corresponding to $(\sigma,\tau)$ form a Stieltjes moment sequence.
\end{coro}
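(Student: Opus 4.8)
The plan is immediate, since the corollary is nothing more than the composition of the two results just proved. First I would note that the hypotheses ``$s_0\ge 1$ and $s_k\ge t_k+1$ for $k\ge 1$'' are verbatim the assumptions of Lemma~\ref{jtp-lem}, so that lemma already tells us the tridiagonal coefficient matrix $J^{\sigma,\tau}$ is totally positive. Then I would feed this conclusion into Theorem~\ref{jh-lem}, whose hypothesis is precisely ``the coefficient matrix is totally positive'' and whose conclusion is ``the corresponding Catalan-like numbers form a Stieltjes moment sequence.'' Chaining the two implications is the whole proof.

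There is therefore no genuine obstacle left at this point; all the mathematical content has already been discharged in the two preceding proofs. If anything, the write-up should make clear where each ingredient lives: the passage from total positivity of $J^{\sigma,\tau}$ to total positivity of the Hankel matrix $H=[r_{i+j,0}]_{i,j\ge0}$ is the work done inside Theorem~\ref{jh-lem}, via the factorization $R_{n+1}=\mathrm{diag}(1,R_n)L_n$ with $L_n$ a nonnegative bidiagonal slice of $J$ (propagating TP to $R$ by induction) and then the identity $H=RTR^{t}$ with $T$ nonnegative diagonal (propagating TP to $H$); while the elementary inequality argument is the work done inside Lemma~\ref{jtp-lem}, where the three-term recursion $D_n=s_nD_{n-1}-t_nD_{n-2}$ combined with $s_n\ge t_n+1$ yields the monotone chain $1\le D_0\le D_1\le\cdots$ of leading principal minors, with irreducibility of $J^{\sigma,\tau}$ then upgrading positivity of these minors to total positivity.

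Finally I would record the consequence for the examples of Section~2: checking $s_k\ge t_k+1$ against the list shows the condition holds for, among others, the Catalan numbers, the large and little Schr\"oder numbers, the (restricted) hexagonal numbers, and the Bell numbers, so each of these is a Stieltjes moment sequence. The cases falling outside the hypothesis of this corollary (such as the central binomial and central Delannoy numbers, and the factorial numbers) are not captured here and require the separate determinant computations indicated elsewhere in the paper.
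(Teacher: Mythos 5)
Your proof is correct and is exactly the paper's argument: the paper proves this corollary simply by combining Theorem~\ref{jh-lem} with Lemma~\ref{jtp-lem}, as you do. Your closing remark is also a fair observation worth noting: the central binomial coefficients (with $s_1=t_1=2$, so $s_1\ge t_1+1$ fails) indeed lie outside the hypothesis, even though the paper's own example following the corollary lists them; they are instead covered directly by Theorem~\ref{jh-lem} (all leading principal minors of $J^{\sigma,\tau}$ equal $2$) or by Corollary~\ref{2t=q,s=p}.
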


\begin{exm}
The Bell numbers $B_n$, the Catalan numbers $C_n$, the central binomial coefficients $\binom{2n}{n}$,
the (restricted) hexagonal numbers $H_n$, and the large Schr\"oder numbers $r_n$
form a Stieltjes moment sequence respectively.
\end{exm}

In what follows we apply Theorem \ref{jh-lem}
to the recursive matrix $R(p,q;s,t)=[r_{n,k}]_{n,k\ge 0}$ defined by
\begin{equation}\label{rr}
\left\{
  \begin{array}{ll}
    r_{0,0}=1,\quad r_{n+1,0}=pr_{n,0}+qr_{n,1},\\
    r_{n+1,k+1}=r_{n,k}+sr_{n,k+1}+tr_{n,k+2}.
  \end{array}
\right.
\end{equation}
The coefficient matrix of \eqref{rr} is
\begin{equation}\label{J-pqst}
J(p,q;s,t)=\left[
\begin{array}{ccccc}
p & 1 &  &  &\\
q & s & 1 &\\
 & t & s & 1 &\\
 & & t & s & \ddots\\
& & &\ddots & \ddots \\
\end{array}\right].
\end{equation}

The following result is a special case of \cite[Proposition 2.5]{CLW-EuJC}.

\begin{lem}\label{jpqst-lem}
The Jacobi matrix $J(p,q;s,t)$ is totally positive if and only if
$s^2\ge 4t$ and $p(s+\sqrt{s^2-4t})/2\ge q$.
\end{lem}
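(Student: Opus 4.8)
The plan is to characterize total positivity of the Jacobi matrix $J(p,q;s,t)$ via its leading principal minors, exactly as suggested in the paragraph preceding Lemma~\ref{jtp-lem}: since $J(p,q;s,t)$ is an irreducible nonnegative tridiagonal matrix, it is TP if and only if all its leading principal minors are positive. Let $D_n$ denote the $n$th leading principal minor (with $D_0=p$, $D_1=ps-q$). Expanding along the last row gives the three-term recurrence $D_n=sD_{n-1}-tD_{n-2}$ for $n\ge 2$, so the problem reduces to the classical question of when a constant-coefficient linear recurrence stays positive given its first two values.

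First I would handle the characteristic equation $x^2=sx-t$. If $s^2<4t$ the roots are a complex conjugate pair $\rho e^{\pm i\theta}$ with $\theta\in(0,\pi)$, and $D_n=\rho^n(A\cos n\theta+B\sin n\theta)$ oscillates in sign (one argues that the phase advances by $\theta$ each step, so eventually some $D_n<0$); hence $s^2\ge 4t$ is necessary. Assuming $s^2\ge 4t$, write the roots as $\lambda=\tfrac{s+\sqrt{s^2-4t}}{2}\ge\mu=\tfrac{s-\sqrt{s^2-4t}}{2}\ge 0$ (nonnegativity of $\mu$ uses $t\ge 0$; note $s>0$ is forced once we want $D_0,D_1>0$). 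The general solution is $D_n=c_1\lambda^n+c_2\mu^n$ in the distinct-root case and $D_n=(c_1+c_2 n)\lambda^n$ when $\lambda=\mu$; solving the initial conditions $D_0=p$, $D_1=ps-q$ and then imposing $D_n>0$ for all $n$ should collapse, after the routine algebra, precisely to the condition $p\lambda\ge q$, i.e. $p(s+\sqrt{s^2-4t})/2\ge q$. The sufficiency direction is the same computation read backwards: if $s^2\ge 4t$ and $p\lambda\ge q$, show $c_1\ge 0$ (and in the double-root case that the linear factor stays positive), so every $D_n>0$ and $J(p,q;s,t)$ is TP.

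I expect the main obstacle to be the bookkeeping in the boundary cases rather than any conceptual difficulty: the double root $s^2=4t$ must be treated separately (the solution picks up the factor $n$), and one must be careful about the degenerate situations $t=0$ (then $\mu=0$ and the recurrence is first-order) and $\lambda=0$ (forcing $s=t=0$, which only leaves $D_0=p$, $D_1=-q$). Keeping track of which inequalities are strict versus non-strict — in particular checking that $p\lambda\ge q$ (not $p\lambda>q$) already suffices, since $D_1=ps-q=p(\lambda+\mu)-q\ge p\mu\ge 0$ and the recurrence then propagates positivity — is where care is needed. Once the positivity of all $D_n$ is pinned down, invoking \cite[Example 2.2, p.~149]{Min88} closes the argument. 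Alternatively, one can simply cite \cite[Proposition 2.5]{CLW-EuJC}, of which this is the stated special case, and restrict attention to verifying that the general statement there specializes to the constants $(p,q;s,t)$ here.
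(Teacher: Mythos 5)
Your fallback option (just cite \cite[Proposition 2.5]{CLW-EuJC}) is in fact all the paper does: Lemma~\ref{jpqst-lem} is stated there as a special case of that result, with no independent proof. So the substance to assess is your direct argument, and it has a genuine gap at exactly the boundary cases that the lemma's non-strict inequalities include. The criterion you invoke from \cite[Example 2.2, p.~149]{Min88} is only usable as a sufficient condition (irreducible nonnegative tridiagonal with \emph{positive} leading principal minors $\Rightarrow$ TP); it is not an equivalence you can run in both directions for this family, because $J(p,q;s,t)$ can be TP while some leading principal minors vanish. Concretely, take $t=0$, $s=1$, $p=1$, $q=1$, so $\lambda=(s+\sqrt{s^2-4t})/2=1$ and $q=p\lambda$: the lemma asserts TP, but $D_1=ps-q=0$ and then $D_n=sD_{n-1}-tD_{n-2}=0$ for all $n\ge 1$. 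This kills your claim that ``$D_1\ge p\mu\ge 0$ and the recurrence then propagates positivity'': nonnegativity with a zero does not propagate to positivity, and Minc's criterion simply does not apply at such boundary points. To repair sufficiency there you need an extra device, e.g.\ verify the strict case $p>0$, $q<p\lambda$ (where one can indeed show $D_n\ge \mu D_{n-1}+c_1(\lambda-\mu)\lambda^{n-1}>0$) and then pass to the boundary by an entrywise limit, using that total positivity (all minors $\ge 0$) is preserved under pointwise limits of matrices; or factor the matrix directly.

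The necessity direction has a second, related gap: from TP you may only conclude that minors are \emph{nonnegative}, and the leading principal minors alone do not force $s^2\ge 4t$. Indeed, for $p=q=0$ every $D_n=0$ regardless of $s,t$, yet the lemma claims $J(0,0;s,t)$ fails to be TP when $s^2<4t$. So your oscillation argument must be run not on the $D_n$ but on other minors of $J$, e.g.\ the principal minors $E_n$ on rows and columns $1,\dots,n$, which satisfy $E_n=sE_{n-1}-tE_{n-2}$ with $E_0=1$, $E_1=s$ (a Chebyshev-type sequence) and do go negative when $s^2<4t$; combined with the leading minors (which handle $q\le p\lambda$ once $p>0$, and the case $p=0<q$ via $D_1=-q<0$), this closes necessity. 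With those two repairs your plan works and is a legitimate self-contained alternative to the paper's citation; as written, however, the boundary case $q=p\lambda$ and the degenerate case $p=q=0$ are not covered.
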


On the other hand,
$R(p,q;s,t)$ is also a Riordan array.
A {\it Riordan array}, denoted by $(d(x),h(x))$, is an infinite lower triangular matrix
whose generating function of the $k$th column is $x^kh^k(x)d(x)$ for $k=0,1,2,\ldots$,
where $d(0)=1$ and $h(0)\neq 0$ \cite{SGWW91}.
A Riordan array $R=[r_{n,k}]_{n,k\ge 0}$
can be characterized by two sequences
$(a_n)_{n\ge 0}$ and $(z_n)_{n\ge 0}$ such that
\begin{equation}\label{rrr-c}
r_{0,0}=1,\quad r_{n+1,0}=\sum_{j\ge 0}z_jr_{n,j},\quad r_{n+1,k+1}=\sum_{j\ge 0}a_jr_{n,k+j}
\end{equation}
for $n,k\ge 0$ (see \cite{HS09} for instance).
Let $Z(x)=\sum_{n\ge 0}z_nx^n$ and $A(x)=\sum_{n\ge 0}a_nx^n$.
Then it follows from \eqref{rrr-c} that
\begin{equation}\label{fg}
  d(x)=\frac{1}{1-xZ(xh(x))},\quad h(x)=A(xh(x)).
\end{equation}
Now $R(p,q;s,t)$ is a Riordan array with $Z(x)=p+qx$ and $A(x)=1+sx+tx^2$.
Let $R(p,q;s,t)=(d(x),h(x))$.
Then by \eqref{fg}, we have
$$d(x)=\frac{1}{1-x(p+qxh(x))},\quad h(x)=1+sxh(x)+tx^2h^2(x).$$
It follows that
$$h(x)=\frac{1-sx-\sqrt{1-2sx+(s^2-4t)x^2}}{2tx^2}$$
and
$$d(x)=\frac{2t}{2t-q+(qs-2pt)x+q\sqrt{1-2sx+(s^2-4t)x^2}}$$
(see \cite{WZ-LAA} for details).

Combining Theorem \ref{jh-lem} and Lemma \ref{jpqst-lem}, we have the following.

\begin{thm}\label{d-hp}
Let $p,q,s,t$ be all nonnegative and
$$\sum_{n\ge 0}d_nx^n=\frac{2t}{2t-q+(qs-2pt)x+q\sqrt{1-2sx+(s^2-4t)x^2}}.$$
If $s^2\ge 4t$ and $p(s+\sqrt{s^2-4t})/2\ge q$,
then $(d_n)_{n\ge 0}$ is a Stieltjes moment sequence.
\end{thm}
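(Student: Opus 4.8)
The plan is very short, since essentially all the preparatory work has already been carried out above. The crucial observation is that the power series $\sum_{n\ge 0}d_nx^n$ appearing in the statement is precisely the generating function $d(x)$ of the $0$th column of the Riordan array $R(p,q;s,t)=(d(x),h(x))$, as computed just above by solving the equations \eqref{fg} with $Z(x)=p+qx$ and $A(x)=1+sx+tx^2$. Hence $d_n=r_{n,0}$ is exactly the sequence of Catalan-like numbers associated with the recurrence \eqref{rr}, and the coefficient matrix of that recurrence is the tridiagonal matrix $J(p,q;s,t)$ displayed in \eqref{J-pqst}.

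From this identification the conclusion follows in two steps. First, I would apply Lemma \ref{jpqst-lem}: the nonnegativity of $p,q,s,t$ together with the hypotheses $s^2\ge 4t$ and $p(s+\sqrt{s^2-4t})/2\ge q$ is exactly the criterion there for the Jacobi matrix $J(p,q;s,t)$ to be totally positive. Second, since the coefficient matrix $J(p,q;s,t)$ is then totally positive, Theorem \ref{jh-lem} applies directly and yields that the corresponding Catalan-like numbers $(r_{n,0})_{n\ge 0}=(d_n)_{n\ge 0}$ form a Stieltjes moment sequence. That completes the argument.

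I do not expect any real obstacle: the theorem is a clean synthesis of Theorem \ref{jh-lem} and Lemma \ref{jpqst-lem} through the Riordan-array description of $R(p,q;s,t)$. The only point that deserves a word of care is bookkeeping about the square root: one must take the branch of $\sqrt{1-2sx+(s^2-4t)x^2}$ with value $1$ at $x=0$ so that $d(0)=1$ and the equality $d(x)=\sum_{n\ge 0}d_nx^n$ is the one coming from the Riordan array; this is already implicit in the derivation attributed to \cite{WZ-LAA}.
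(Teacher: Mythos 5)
Your proposal is correct and follows exactly the paper's route: identify $(d_n)$ with the Catalan-like numbers $r_{n,0}$ of the Riordan array $R(p,q;s,t)$ whose coefficient matrix is $J(p,q;s,t)$, invoke Lemma \ref{jpqst-lem} for total positivity, and then apply Theorem \ref{jh-lem}. Your remark about the branch of the square root with value $1$ at $x=0$ is a sensible bookkeeping point that the paper leaves implicit.
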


Setting $q=t$ in Theorem~\ref{d-hp}, we obtain

\begin{coro}\label{t=q}
Let $p,s,t$ be all nonnegative and
$$\sum_{n\ge 0}d_nx^n=\frac{2}{1+(s-2p)x+\sqrt{1-2sx+(s^2-4t)x^2}}.$$
If $s^2\ge 4t$ and $p(s+\sqrt{s^2-4t})/2\ge t$,
then $(d_n)_{n\ge 0}$ is a Stieltjes moment sequence.
\end{coro}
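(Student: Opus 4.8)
The plan is to obtain Corollary~\ref{t=q} as a direct specialization of Theorem~\ref{d-hp}, so essentially no new work is needed beyond a careful substitution. First I would set $q=t$ in the generating function appearing in Theorem~\ref{d-hp}: the numerator $2t$ and the three occurrences of $q$ in the denominator all become $t$, giving
$$\sum_{n\ge 0}d_nx^n=\frac{2t}{2t-t+(ts-2pt)x+t\sqrt{1-2sx+(s^2-4t)x^2}}=\frac{2t}{t\bigl(1+(s-2p)x+\sqrt{1-2sx+(s^2-4t)x^2}\bigr)},$$
and cancelling the common factor $t$ (legitimate since the hypotheses below will force $t>0$, as $p(s+\sqrt{s^2-4t})/2\ge t$ with $p,s,t\ge0$ precludes $t=0$ unless the statement is vacuous) yields exactly the generating function in the Corollary. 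Second, I would check that the hypotheses of Theorem~\ref{d-hp} specialize correctly: "$p,q,s,t$ all nonnegative" with $q=t$ becomes "$p,s,t$ all nonnegative", the condition $s^2\ge4t$ is unchanged, and $p(s+\sqrt{s^2-4t})/2\ge q$ becomes $p(s+\sqrt{s^2-4t})/2\ge t$. These are precisely the hypotheses stated in Corollary~\ref{t=q}.

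With the substitution verified, the conclusion follows immediately: by Theorem~\ref{d-hp}, under these conditions $(d_n)_{n\ge0}$ is a Stieltjes moment sequence, which is exactly what Corollary~\ref{t=q} asserts. One can optionally remark that the arithmetic identity $R(p,t;s,t)=(d(x),h(x))$ with the displayed $d(x)$ is the Riordan-array incarnation of the recursive matrix from \eqref{rr} with $q=t$, tying the statement back to the Catalan-like framework, but this is not strictly necessary for the proof.

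I do not anticipate any genuine obstacle here, since the result is labelled a corollary of Theorem~\ref{d-hp}; the only point requiring a moment's care is the cancellation of the factor $t$ in numerator and denominator, which must be justified by noting that the hypothesis $p(s+\sqrt{s^2-4t})/2\ge t\ge0$ together with $p,s,t\ge0$ makes the case $t=0$ degenerate. If one wishes to allow $t=0$ as well, one interprets the formula by continuity and observes that the conclusion for $t=0$ can be recovered as a limiting case or checked separately, but the cleanest route is simply to state the corollary for the nontrivial range and let the substitution do all the work.
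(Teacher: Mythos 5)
Your proposal is correct and is exactly the paper's argument: the paper obtains Corollary \ref{t=q} simply by setting $q=t$ in Theorem \ref{d-hp} and cancelling the factor $t$. The only small inaccuracy is your parenthetical claim that the hypotheses preclude $t=0$ (they do not: $t=0$ satisfies $p(s+\sqrt{s^2-4t})/2\ge t$ automatically), but your fallback of treating $t=0$ as a separately checked or limiting degenerate case covers this.
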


In particular, taking $p=s$ in Corollary \ref{t=q},
and noting $s^2\ge 4t$ implies that $s(s+\sqrt{s^2-4t})/2\ge t$, we have

\begin{coro}\label{t=q,s=p}
Let $s,t$ be nonnegative and
$$\sum_{n\ge 0}d_nx^n=\frac{2}{1-sx+\sqrt{1-2sx+(s^2-4t)x^2}}.$$
If $s^2\ge 4t$,
then $(d_n)_{n\ge 0}$ is a Stieltjes moment sequence.
\end{coro}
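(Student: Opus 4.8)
The plan is to derive Corollary~\ref{t=q,s=p} directly from Corollary~\ref{t=q} by the substitution $p=s$, so essentially all the work is bookkeeping together with one short inequality check. First I would set $p=s$ and $q=t$ in Theorem~\ref{d-hp}: the generating function becomes
$$\sum_{n\ge 0}d_nx^n=\frac{2t}{2t-t+(ts-2st)x+t\sqrt{1-2sx+(s^2-4t)x^2}}=\frac{2t}{t\bigl(1-sx+\sqrt{1-2sx+(s^2-4t)x^2}\bigr)},$$
and cancelling the common factor $t$ (harmless when $t>0$; the degenerate case $t=0$ can be handled separately or by continuity) gives exactly the stated generating function
$$\sum_{n\ge 0}d_nx^n=\frac{2}{1-sx+\sqrt{1-2sx+(s^2-4t)x^2}}.$$
So the only thing left is to verify that the hypotheses of Corollary~\ref{t=q} (equivalently Theorem~\ref{d-hp} with $p=s$, $q=t$) are met whenever $s^2\ge 4t$ and $s,t\ge 0$.

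The nonnegativity of $p,q,s,t$ is immediate from $p=s\ge 0$, $q=t\ge 0$. The condition $s^2\ge 4t$ is assumed. The remaining condition to check is $p(s+\sqrt{s^2-4t})/2\ge q$, i.e. $s(s+\sqrt{s^2-4t})/2\ge t$. Since $s^2\ge 4t$ we have $s\ge 2\sqrt t\ge 0$, hence
$$\frac{s(s+\sqrt{s^2-4t})}{2}\ge \frac{s\cdot s}{2}=\frac{s^2}{2}\ge \frac{4t}{2}=2t\ge t,$$
so the inequality holds. (This is precisely the parenthetical remark already recorded in the statement of Corollary~\ref{t=q,s=p}, so I would simply spell it out.) Therefore all hypotheses of Corollary~\ref{t=q} are satisfied, and we conclude that $(d_n)_{n\ge 0}$ is a Stieltjes moment sequence.

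There is essentially no obstacle here; the ``hard part'' is only the minor care needed for the degenerate case $t=0$, where one cannot literally cancel $t$ from numerator and denominator. In that case $s^2\ge 4t=0$ forces nothing extra, and one checks directly that with $t=0$ the generating function reduces to $\dfrac{2}{1-sx+|1-sx|}$, which on a neighbourhood of $0$ equals $\dfrac{1}{1-sx}$, the moment sequence $(s^n)_{n\ge 0}=\int_0^\infty x^n\,d\delta_s(x)$ of a point mass at $s\ge 0$; alternatively one invokes Theorem~\ref{d-hp} with $q=t=0$, $p=s$ directly, whose hypotheses $s^2\ge 0$ and $s(s+\sqrt{s^2})/2=s^2\ge 0=q$ hold trivially. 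Either way the conclusion stands, completing the proof.
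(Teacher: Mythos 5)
Your proof is correct and follows the paper's own route exactly: the paper obtains this corollary by setting $p=s$ in Corollary~\ref{t=q} and noting that $s^2\ge 4t$ gives $s(s+\sqrt{s^2-4t})/2\ge t$, which is precisely your inequality check. Your extra treatment of the degenerate case $t=0$ is a harmless refinement, not a different approach.
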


On the other hand, taking $p=s$ and $q=2t$ in Theorem~\ref{d-hp}, we obtain

\begin{coro}\label{2t=q,s=p}
Let $s,t$ be nonnegative and
$$\sum_{n\ge 0}d_nx^n=\frac{1}{\sqrt{1-2sx+(s^2-4t)x^2}}.$$
If $s^2\ge 4t$,
then $(d_n)_{n\ge 0}$ is a Stieltjes moment sequence.
\end{coro}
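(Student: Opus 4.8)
The plan is to derive this corollary directly from Theorem~\ref{d-hp} by the specialization $p=s$, $q=2t$, exactly as the preceding sentence announces; the only work is to check that this substitution produces the stated generating function and that the hypotheses of Theorem~\ref{d-hp} are indeed met.

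First I would substitute $p=s$ and $q=2t$ into the expression
$$\frac{2t}{2t-q+(qs-2pt)x+q\sqrt{1-2sx+(s^2-4t)x^2}}.$$
The constant $2t-q$ becomes $0$, the coefficient $qs-2pt$ of $x$ becomes $2ts-2ts=0$, and the radical term is multiplied by $q=2t$, so the denominator collapses to $2t\sqrt{1-2sx+(s^2-4t)x^2}$; cancelling the common factor $2t$ leaves $\left(1-2sx+(s^2-4t)x^2\right)^{-1/2}$, which is the generating function in the statement. This cancellation is legitimate only when $t>0$, so I would dispose of the case $t=0$ separately: then $s^2\ge 4t$ holds automatically, $1-2sx+s^2x^2=(1-sx)^2$, and $\sum_{n\ge 0}d_nx^n=(1-sx)^{-1}$, i.e. $d_n=s^n$, which is the moment sequence of the point mass at $s$ (the zero measure if $s=0$ as well).

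Next I would verify the hypotheses of Theorem~\ref{d-hp} in the case $t>0$: the parameters $p=s$, $q=2t$, $s$, $t$ are all nonnegative since $s,t\ge 0$; the condition $s^2\ge 4t$ is assumed; and the condition $p(s+\sqrt{s^2-4t})/2\ge q$ reads $s(s+\sqrt{s^2-4t})/2\ge 2t$, equivalently $s\sqrt{s^2-4t}\ge 4t-s^2$. Under $s^2\ge 4t$ the right-hand side is $\le 0$ while the left-hand side is $\ge 0$, so the inequality holds. Hence Theorem~\ref{d-hp} applies and $(d_n)_{n\ge 0}$ is a Stieltjes moment sequence.

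I do not anticipate a genuine obstacle: the argument is a routine specialization, and the only points requiring a moment's care are the degenerate case $t=0$, where the $2t/2t$ form of Theorem~\ref{d-hp} is indeterminate, and the observation — analogous to the one made in passing for Corollary~\ref{t=q,s=p} — that the extra inequality $p(s+\sqrt{s^2-4t})/2\ge q$ is automatically implied by $s^2\ge 4t$ once one substitutes $p=s$ and $q=2t$.
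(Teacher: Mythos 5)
Your proof is correct and takes the same route as the paper, whose entire argument is ``taking $p=s$ and $q=2t$ in Theorem~\ref{d-hp}''; your explicit check that $s^2\ge 4t$ forces $s(s+\sqrt{s^2-4t})/2\ge 2t$, and your separate treatment of the degenerate case $t=0$ (where the $2t/2t$ cancellation in the generating function breaks down), supply details the paper leaves implicit. One small slip in an aside: when $t=0$ and $s=0$ the sequence is $d_0=1$ and $d_n=0$ for $n\ge 1$, which is the moment sequence of the point mass at $0$, not of the zero measure.
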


\begin{rem}\label{ben}
Corollaries \ref{t=q,s=p} and \ref{2t=q,s=p} have occurred in Bennett \cite[\S 9]{Ben11}.
\end{rem}

\begin{exm}\label{ex2}
The Catalan numbers $C_n$,
the central binomial coefficients $\binom{2n}{n}$,
the central Delannoy numbers $D_n$,
the large Schr\"oder numbers $r_n$,
the little Schr\"oder numbers $S_n$
have generating functions
\begin{gather*}
  \sum_{n\ge 0}C_nx^n=\frac{2}{1+\sqrt{1-4x}}, \\
  \sum_{n\ge 0}\binom{2n}{n}x^n=\frac{1}{\sqrt{1-4x}}, \\
  \sum_{n\ge 0}D_nx^n=\frac{1}{\sqrt{1-6x+x^2}}, \\
  \sum_{n\ge 0}r_nx^n=\frac{2}{1-x+\sqrt{1-6x+x^2}},\\
  \sum_{n\ge 0}S_nx^n=\frac{2}{1+x+\sqrt{1-6x+x^2}}.
\end{gather*}
respectively.
Again we see that these numbers are all Stieltjes moment sequences.
\end{exm}

\section{Remarks}
\hspace*{\parindent}

A Stieltjes moment sequence $(m_n)$ is called {\it determinate},
if there is a unique measure $\mu$ on $[0,+\infty)$ such that \eqref{i-e} holds;
otherwise it is called {\it indeterminate}.
For example, $(n!)$ is a Stieltjes moment sequence of the exponential distribution by \eqref{fn-i}
and determinate by Stirling's approximation and Carleman's criterion which states that the divergence of the series
$$\sum_{n\ge 0}\frac{1}{\sqrt[2n]{m_n}}$$
implies the determinacy of the moment sequence $(m_n)$ (see \cite[Theorem 1.11]{ST43} for instance).
We have shown that many well-known Catalan-like numbers are Stieltjes moment sequences.
However, we do not know how to obtain the associated measures in general
and whether these moment sequences are determinate.

\section*{Acknowledgement}

This work was supported in part by
the NSF of China
(Grant No. 11371078).

\end{document}